\newtheorem{theorem}{Theorem}[section]
\newtheorem{proof}[theorem]{Proof}
\newtheorem{e-proposition}[theorem]{Proposition}
\newtheorem{e-definition}[theorem]{Definition\rm}
\def\og{\leavevmode\raise.3ex\hbox{$\scriptscriptstyle\langle\!\langle$~}}
\def\fg{\leavevmode\raise.3ex\hbox{~$\!\scriptscriptstyle\,\rangle\!\rangle$}}
\begin{document}
% place in the next line the header (rubrique) chosen for your article,
% if you know it (you can also have 2, format : Header1/Header2
\centerline{}
\begin{frontmatter}

% Title, authors and addresses

% use the thanksref command within \title, \author or \address for footnotes;
% use the ead command for the email address,
% and the form \ead[url] for the home page:
% \title{Title\thanksref{label1}}
% \thanks[label1]{}
% \author{Name\thanksref{label2}}
% \ead{email address}
% \ead[url]{home page}
% \thanks[label2]{}
% \address{Address\thanksref{label3}}
% \thanks[label3]{}
\selectlanguage{english}
\title{About sum rules for Gould-Hopper polynomials}

% use optional labels to link authors explicitly to addresses:
% \author[label1,label2]{}
% \address[label1]{}
% \address[label2]{}
% The [label1] can be suppressed if there is only one address for all authors

\selectlanguage{english}
\author{Olivier L\'{e}v\^{e}que},
\ead{olivier.leveque@epfl.ch}
\author{Christophe Vignat},
\ead{christophe.vignat@epfl.ch}
\address{L.T.H.I., E.P.F.L., Switzerland}

% If you know the dates of reception, and acceptation you can put them now;
%  idem the name of the person presenting the Note

%\medskip
%\begin{center}
%{\small Received *****; accepted after revision +++++\\
%Presented by £££££}
%\end{center}

\begin{abstract}
\selectlanguage{english}
%In \cite{Graczyk:2004fk}, a composition formula for squares
%of Gould-Hopper and Hermite polynomials is given; it is proved using
%the Mehler formula. We provide here a probabilistic proof that reveals
%the importance of orthogonal invariance of multivariate Gaussian laws.
We show that various identities from \cite{daboul}  and \cite{Graczyk:2004fk} involving Gould-Hopper polynomials can be deduced from the real but also complex orthogonal invariance of multivariate Gaussian distributions. We also deduce from this principle a useful stochastic representation for the inner product of two non-centered Gaussian vectors and two non-centered Gaussian matrices.
%{\it To cite this article: A. Name1, C. R. Acad. Sci. Paris, Ser. I 340 (2005).}

\vskip 0.5\baselineskip

\selectlanguage{francais}
% Text of abstract in French
\noindent{\bf R\'esum\'e} \vskip 0.5\baselineskip \noindent
{\bf Formules de sommation pour les polyn\^{o}mes de Gould et Hopper}
Nous montrons que des identit\'{e}s pour les polyn\^{o}mes de Gould et Hopper issues de \cite{daboul}  et  \cite{Graczyk:2004fk}   peuvent \^{e}tre d\'{e}duites de l'invariance orthogonale r\'{e}elle mais aussi  complexe des lois Gaussiennes multivari\'{e}es. Nous d\'{e}duisons aussi de cette propri\'{e}t\'{e} une repr\'{e}sentation stochastique du produit scalaire de deux vecteurs gaussiens non-centr\'{e}s, ainsi que de deux matrices gaussiennes non-centr\'ees.
%{\it Pour citer cet article: A. Name1, C. R. Acad. Sci.
%Paris, Ser. I 340 (2005).}

\end{abstract}
\end{frontmatter}

% now the Version française abrégée, if it exists
%\selectlanguage{francais}
%\section*{Version fran\c{c}aise abr\'eg\'ee}
% Text of your Version française abrégée here.
% Note you do not need to repeat here equations that you use in the
% main text - for example 'voir (3)' is quite acceptable.

\selectlanguage{english}
% main text
\vspace{-0.5cm}
\section{An identity by Graczyck and Nowak}
\vspace{-0.5cm}
%\label{}
We adopt the multi-index notation: with $n\ge1,$ a multi-index is
denoted as $\underbar{m}=\left(m_{1},\dots,m_{n}\right)\in\mathbb{N}^{n}$
and its length $\vert\underbar{m}\vert=m_{1}+\dots+m_{n}.$ In particular,
the multi-index factorial is $\underbar{m}!=m_{1}!\dots m_{n}!$ and, with
$\underbar{x}\in\mathbb{R}^{n},$ the multi-index power $\underbar{x}^{\underbar{m}}=x_{1}^{m_{1}}\dots x_{n}^{m_{n}}.$
The Gould-Hopper polynomials \cite{Gould:1962uq} are 
\begin{equation}
\label{Gould-Hopper}
g_{m}\left(x,p\right)=\sum_{k=0}^{\left\lfloor m/2\right\rfloor }\frac{m!}{k!\left(m-2k\right)}p^{k}x^{m-2k};\,\, x,p\in\mathbb{R}.
\end{equation}
Their multi-index version reads $
g_{\underbar{m}}\left(\underbar{x},p\right)=\prod_{i=1}^{n}g_{m_{i}}\left(x_{i},p\right).
$
\\
We use the following notations: $\underbar{X}\sim\mathcal{N}(\underbar{m}_{X},\uuline{R_{X}})$
expresses the fact that the vector $\underbar{X}$ is Gaussian with mean
$\underbar{m}_{X}$ and covariance matrix $\uuline{R_{X}};$
the expectation operator is denoted as $\mathbb{E},$ 
the Euclidean norm of vector $\underbar{z}$ as $\vert\underbar{z}\vert$ and $\left(n\right)_{j}$
is  the Pochhammer symbol $\frac{\Gamma\left(n+j\right)}{\Gamma\left(n\right)}.$ Finally, underlined variables denote vectors, random variables are capitalized,   $\underbar{x}^{t}$ denotes the transpose vector of $\underbar{x}$ and $\sim $ means equality in distribution.

Our first result is a stochastic representation of the inner product of two non centered Gaussian vectors.
\begin{theorem}
\label{thm:inner}
If $\underbar{x},$ $\underbar{y} \in \mathbb{R}^{n}$, $p \in \mathbb{R}$ and $\underbar{X},$ $\underbar{Y}$ are independent random vectors $\sim \mathcal{N}(\underbar{0},\uuline{I_{n}})$ then
\begin{equation}
\label{scalar}
\left(\underbar{x}+\sqrt{p}\underbar{N}\right)^{t}\left(\underbar{y}+\sqrt{p}\underbar{M}\right)\sim\left(x+\sqrt{p}N_{1}\right)\left(y+\sqrt{p}M_{1}\right)+pZ_{n-1}N
\end{equation}
where $N,\,N_{1}$ and $M_{1}$ are independent standard Gaussian random variables,
\begin{equation}
\label{polarization}
x=\frac{\vert\underbar{x}+\underbar{y}\vert+\vert\underbar{x}-\underbar{y}\vert}{2},\,\,  \,\, y=\frac{\vert\underbar{x}+\underbar{y}\vert-\vert\underbar{x}-\underbar{y}\vert}{2}
\end{equation}
and  $Z_{n-1}$ is chi-distributed with $n-1$ degrees of freedom and independent of $N,\,N_{1}$ and $M_{1}$.
\end{theorem}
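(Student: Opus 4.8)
The plan is to combine the classical polarization identity with the rotational invariance of isotropic Gaussian vectors. Put $\underbar{U}=\underbar{x}+\sqrt{p}\,\underbar{N}$ and $\underbar{V}=\underbar{y}+\sqrt{p}\,\underbar{M}$, where $\underbar{N},\underbar{M}$ are the two independent $\mathcal{N}(\underbar{0},\uuline{I_{n}})$ vectors, so that the left-hand side of (\ref{scalar}) is $\underbar{U}^{t}\underbar{V}$. First I would use $4\,\underbar{U}^{t}\underbar{V}=\vert\underbar{U}+\underbar{V}\vert^{2}-\vert\underbar{U}-\underbar{V}\vert^{2}$ and observe that $\underbar{U}+\underbar{V}=(\underbar{x}+\underbar{y})+\sqrt{p}\,(\underbar{N}+\underbar{M})$ and $\underbar{U}-\underbar{V}=(\underbar{x}-\underbar{y})+\sqrt{p}\,(\underbar{N}-\underbar{M})$ are jointly Gaussian and uncorrelated, because $\underbar{N}$ and $\underbar{M}$ are independent with the same covariance $\uuline{I_{n}}$; hence they are independent, with $\underbar{U}+\underbar{V}\sim\mathcal{N}(\underbar{x}+\underbar{y},2p\,\uuline{I_{n}})$ and $\underbar{U}-\underbar{V}\sim\mathcal{N}(\underbar{x}-\underbar{y},2p\,\uuline{I_{n}})$.

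Next I would apply orthogonal invariance to each piece separately: for $\underbar{A}\sim\mathcal{N}(\underbar{\mu},2p\,\uuline{I_{n}})$ and any orthogonal $\uuline{O}$ one has $\vert\underbar{A}\vert^{2}=\vert\uuline{O}\underbar{A}\vert^{2}$ with $\uuline{O}\underbar{A}\sim\mathcal{N}(\uuline{O}\underbar{\mu},2p\,\uuline{I_{n}})$, so the law of $\vert\underbar{A}\vert^{2}$ depends on $\underbar{\mu}$ only through $\vert\underbar{\mu}\vert$. Combined with the independence of the two pieces, this shows that the law of $\underbar{U}^{t}\underbar{V}$ depends on $(\underbar{x},\underbar{y})$ only through $\vert\underbar{x}+\underbar{y}\vert$ and $\vert\underbar{x}-\underbar{y}\vert$. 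I would then evaluate it at the representative $\underbar{x}'=(x,0,\dots,0)$, $\underbar{y}'=(y,0,\dots,0)$ with $x,y$ as in (\ref{polarization}); this is legitimate precisely because $x+y=\vert\underbar{x}+\underbar{y}\vert\ge0$ and $x-y=\vert\underbar{x}-\underbar{y}\vert\ge0$, so that $\vert\underbar{x}'+\underbar{y}'\vert=\vert\underbar{x}+\underbar{y}\vert$ and $\vert\underbar{x}'-\underbar{y}'\vert=\vert\underbar{x}-\underbar{y}\vert$. Hence $\underbar{U}^{t}\underbar{V}\sim(\underbar{x}'+\sqrt{p}\,\underbar{N})^{t}(\underbar{y}'+\sqrt{p}\,\underbar{M})=(x+\sqrt{p}\,N_{1})(y+\sqrt{p}\,M_{1})+p\sum_{k=2}^{n}N_{k}M_{k}$.

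It remains to identify the remainder $\sum_{k=2}^{n}N_{k}M_{k}$: conditionally on $(N_{2},\dots,N_{n})$ it is a centered Gaussian of variance $\sum_{k=2}^{n}N_{k}^{2}$, hence $\sum_{k=2}^{n}N_{k}M_{k}\sim\big(\sum_{k=2}^{n}N_{k}^{2}\big)^{1/2}N=Z_{n-1}N$, where $Z_{n-1}=\vert(N_{2},\dots,N_{n})\vert$ is chi-distributed with $n-1$ degrees of freedom and $N$ is a standard Gaussian which, by the same conditioning, may be taken independent of $N_{1}$, $M_{1}$ and $Z_{n-1}$. Substituting gives exactly the right-hand side of (\ref{scalar}) together with the independence structure asserted there, since $N_{1}$, $M_{1}$, $(N_{2},\dots,N_{n})$ and $N$ are mutually independent by construction.

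I do not expect a serious obstacle. The two points requiring care are the independence of $\underbar{U}+\underbar{V}$ and $\underbar{U}-\underbar{V}$ — which genuinely uses both the independence of $\underbar{N},\underbar{M}$ and the equality of their covariances — and the elementary sign bookkeeping $x\ge\vert y\vert$ that makes $(x,0,\dots,0)$, $(y,0,\dots,0)$ an admissible representative. An equivalent but more ornate route is to recast the polarization complex-analytically, via $\underbar{z}=\underbar{x}+i\underbar{y}$ and the standard complex Gaussian $\underbar{W}=\underbar{N}+i\underbar{M}$, using $2\,\underbar{U}^{t}\underbar{V}=\mathrm{Im}\,\big[(\underbar{z}+\sqrt{p}\,\underbar{W})^{t}(\underbar{z}+\sqrt{p}\,\underbar{W})\big]$ and the invariance of $\underbar{W}$ under the real orthogonal group; I would nevertheless keep the real argument above as the main one, as it avoids having to track which complex linear maps simultaneously preserve the bilinear form and the Hermitian law.
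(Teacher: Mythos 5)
Your proof is correct and follows essentially the same route as the paper: polarization of the inner product, independence of the sum and difference vectors (the paper realizes this via the explicit $45^{\circ}$ rotation $(\underbar{N}\pm\underbar{M})/\sqrt{2}$), orthogonal invariance to reduce to the representative $(x,0,\dots,0)$, $(y,0,\dots,0)$, and the identification $\sum_{k\ge 2}N_{k}M_{k}\sim Z_{n-1}N$ by conditioning. Your explicit check that $x+y=\vert\underbar{x}+\underbar{y}\vert$ and $x-y=\vert\underbar{x}-\underbar{y}\vert$ are nonnegative is a welcome detail the paper leaves implicit, but the argument is the same.
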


\begin{proof}
The proof is based on the polarization identity, that allows to express this inner product as
\begin{eqnarray*}
\frac{1}{4}\left[\vert\underbar{x}+\sqrt{p}\underbar{N}+\underbar{y}+\sqrt{p}\underbar{M}\vert^{2}-\vert\underbar{x}+\sqrt{p}\underbar{N}-\underbar{y}-\sqrt{p}\underbar{M}\vert^{2}\right]
&   & \\
 \sim \frac{1}{4}\left[\vert\underbar{x}+\underbar{y}+\sqrt{2p}\tilde{\underbar{N}}\vert^{2}-\vert\underbar{x}-\underbar{y}+\sqrt{2p}\tilde{\underbar{M}}\vert^{2}\right]
\end{eqnarray*}
where vectors  $\tilde{\underbar{M}}=\frac{1}{\sqrt{2}}\left(\underbar{N}+\underbar{M}\right)$ and $\tilde{\underbar{N}}=\frac{1}{\sqrt{2}}\left(\underbar{N}-\underbar{M}\right)
%\sim\mathcal{N}\left(0,2I_{n}\right)
$
are again Gaussian and independent as a consequence of the orthogonal invariance. 

The next  step is again a consequence of this invariance: if $\underbar{G}\sim\mathcal{N}(\underbar{0}, \uuline{I_{n}})$
and $\underbar{z}\in\mathbb{R}^{n},$ then the norm of the random vector $\underbar{z}+\underbar{G}$
depends on $\vert\underbar{z}\vert$ only; more precisely
\[
\vert\underbar{z}+\underbar{G}\vert\sim\vert\vert\underbar{z}\vert\underbar{e}_{1}+\underbar{G}\vert=\sqrt{\left(\vert\underbar{z}\vert+G_{1}\right)^{2}+G_{2}^{2}+\dots+G_{n}^{2}}
\]
where\textbf{ $\underbar{e}_{1}$} is the first (or any) column vector
of the identity matrix $\uuline{I_{n}}$. We deduce that 
\[
\left(\underbar{x}+\sqrt{p}\underbar{N}\right)^{t}\left(\underbar{y}+\sqrt{p}\underbar{M}\right) \sim \frac{1}{4}\left[\vert\vert\underbar{x}+\underbar{y}\vert\underbar{e}_{1}+\sqrt{p}\underbar{N}+\sqrt{p}\underbar{M}\vert^{2}-\vert\vert\underbar{x}-\underbar{y}\vert\underbar{e}_{1}+\sqrt{p}\underbar{N}-\sqrt{p}\underbar{M}\vert^{2}\right].
\]
and by the polarization identity, with $X$ and $Y$ as in Theorem \ref{thm:inner}, this expression simplifies to
\[
\left(x\underbar{e}_{1}+\sqrt{p}\underbar{N}\right)^{t}\left(y\underbar{e}_{1}+\sqrt{p}\underbar{M}\right)=
\left(x+\sqrt{p}N_{1}\right)\left(y+\sqrt{p}M_{1}\right)+p\sum_{i=2}^{n}N_{i}M_{i}.
\]

The proof follows from the identity in distribution, with $Z_{n-1} \sim \chi_{n-1}$ independent of $N\sim\mathcal{N}\left(0,1\right)$:
\[
\sum_{i=2}^{n}N_{i}M_{i}\sim \left(\sum_{i=2}^{n}M_{i}^{2}\right)^{\frac{1}{2}}N = Z_{n-1}N
\]
\end{proof}
% We deduce
%\[
%\left(\underbar{x}+\sqrt{p}\underbar{N}\right)^{t}\left(\underbar{y}+\sqrt{p}\underbar{M}\right)\sim\left(X+\sqrt{p}N_{1}\right)\left(Y+\sqrt{p}M_{1}\right)+pZ_{n-1}N
%\]
%where $M_{1},N_{1}$ and $N$ are independent and standard Gaussian.

We deduce an elementary proof of the following \cite[Thm 3]{Graczyk:2004fk}, originally derived using generating functions.
\begin{theorem}
\label{thm:Graczyk}
For all $M\in\mathbb{N},\,\, p\in\mathbb{R}$ and $\underbar{x},\underbar{y}\in\mathbb{R}^{n},$
we have, with $x$ and $y$ as in (\ref{polarization}),
\begin{equation}
\sum_{\vert\underbar{m}\vert=M}\frac{1}{\underbar{m}!}g_{\underbar{m}}\left(\underbar{x},p\right)g_{\underbar{m}}\left(\underbar{y},p\right)=\sum_{j=0}^{\left\lfloor M/2\right\rfloor }\frac{\left(2p\right)^{2j}}{j!\left(M-2j\right)!}\left(\frac{n-1}{2}\right)_{j}g_{M-2j}\left(x,p\right)g_{M-2j}\left(y,p\right)\label{eq:nowak3}
\end{equation}
\end{theorem}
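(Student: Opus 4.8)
The plan is to extract the identity \eqref{eq:nowak3} from the stochastic representation \eqref{scalar} of Theorem \ref{thm:inner} by computing the moments of both sides. The key observation is the generating-function identity for Gould--Hopper polynomials, namely that for a standard Gaussian $N$ one has $\mathbb{E}\,(x+\sqrt{p}\,N)^{m} = (-p)^{m/2}\,\mathrm{He}_{m}(x/\sqrt{-p})$ rewritten in Gould--Hopper form as $\mathbb{E}\,(x+\sqrt{p}\,N)^{m} = g_{m}(x,-p)$ (up to the standard sign convention; more precisely $\mathbb{E}\,(x+\sqrt{p}\,N)^{m}$ is a polynomial in $x$ whose coefficients involve the Gaussian moments $\mathbb{E}\,N^{2k}=(2k-1)!!$, which reproduces exactly the coefficients $\tfrac{m!}{k!(m-2k)!2^{k}}$ appearing in $g_{m}$). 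Likewise, for independent standard Gaussians the product structure of $g_{\underline m}$ matches the product of independent coordinates. So the first step is to record this moment formula carefully, fixing the sign of $p$ so that $\mathbb{E}\,(x+\sqrt{p}\,N)^{m}=g_{m}(x,p)$ with the paper's convention (this amounts to checking $\mathbb{E}(\sqrt p\,N)^{2k}= \frac{(2k)!}{2^k k!} p^k$, matching \eqref{Gould-Hopper}).

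With that in hand, I would raise both sides of \eqref{scalar} to the power $M$ and take expectations. On the left-hand side, $\bigl(\underline x+\sqrt p\,\underline N\bigr)^{t}\bigl(\underline y+\sqrt p\,\underline M\bigr) = \sum_{i=1}^n (x_i+\sqrt p N_i)(y_i+\sqrt p M_i)$, so by the multinomial theorem and independence across $i$ and between $\underline N,\underline M$,
\[
\mathbb{E}\Bigl[\bigl((\underline x+\sqrt p\,\underline N)^{t}(\underline y+\sqrt p\,\underline M)\bigr)^{M}\Bigr] = \sum_{|\underline m|=M}\frac{M!}{\underline m!}\prod_{i=1}^n \mathbb{E}(x_i+\sqrt p N_i)^{m_i}\,\mathbb{E}(y_i+\sqrt p M_i)^{m_i} = M!\sum_{|\underline m|=M}\frac{1}{\underline m!}g_{\underline m}(\underline x,p)\,g_{\underline m}(\underline y,p),
\]
which is $M!$ times the left side of \eqref{eq:nowak3}.

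For the right-hand side of \eqref{scalar}, I expand $\bigl((x+\sqrt p N_1)(y+\sqrt p M_1)+pZ_{n-1}N\bigr)^{M}$ by the binomial theorem as $\sum_{k=0}^{M}\binom{M}{k}\bigl((x+\sqrt p N_1)(y+\sqrt p M_1)\bigr)^{M-k}(pZ_{n-1}N)^{k}$ and take expectations using independence of the five variables $N,N_1,M_1,Z_{n-1}$. The odd-$k$ terms vanish because $\mathbb{E}N^{k}=0$; for $k=2j$ we get $\mathbb{E}N^{2j}=(2j-1)!!=\frac{(2j)!}{2^j j!}$, $\mathbb{E}Z_{n-1}^{2j}=2^{j}\bigl(\tfrac{n-1}{2}\bigr)_j$ (the standard even moments of a $\chi_{n-1}$ variable), and $\mathbb{E}\bigl[(x+\sqrt p N_1)^{M-2j}\bigr]\,\mathbb{E}\bigl[(y+\sqrt p M_1)^{M-2j}\bigr] = g_{M-2j}(x,p)\,g_{M-2j}(y,p)$ by the moment formula from step one (here one uses that $(x+\sqrt p N_1)(y+\sqrt p M_1)$ raised to a power factors over the two independent variables). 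Collecting the constants $\binom{M}{2j}p^{2j}\cdot\frac{(2j)!}{2^j j!}\cdot 2^{j}\bigl(\tfrac{n-1}{2}\bigr)_j = \frac{M!}{(M-2j)!\,j!}p^{2j}\bigl(\tfrac{n-1}{2}\bigr)_j$, and comparing with the target, I will need the factor $(2p)^{2j}$ versus $p^{2j}$ — this signals that the clean bookkeeping point, and the main obstacle, is pinning down the normalization: which power of $p$ sits inside $g_m(\cdot,p)$ in \eqref{scalar} versus the stated identity, and whether the $\sqrt p$ in Theorem \ref{thm:inner} should be $\sqrt{2p}$ to match \eqref{eq:nowak3}. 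I expect the resolution is that applying the representation at parameter $2p$ (as the polarization step in the proof of Theorem \ref{thm:inner} naturally produces $\sqrt{2p}\,\tilde{\underline N}$) yields exactly the $(2p)^{2j}$ and the arguments $g_{M-2j}(x,p)$, $g_{M-2j}(y,p)$ with the original $p$; carefully tracking this rescaling through the two polarization identities is the one genuinely delicate bit, everything else being the routine moment computations sketched above. Dividing through by $M!$ then gives \eqref{eq:nowak3}. $\qquad\blacksquare$
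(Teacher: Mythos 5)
Your strategy---raise the stochastic representation (\ref{scalar}) to the $M$-th power, take expectations, identify the left side via the multinomial theorem and the right side via the even moments of $N$ and $Z_{n-1}$---is exactly the paper's proof. The one point you explicitly leave open, the normalization, is however the place where your write-up as it stands is wrong: the moment formula you start from, $\mathbb{E}\left(x+\sqrt{p}N\right)^{m}=g_{m}\left(x,p\right)$, does not hold. Indeed
\[
\mathbb{E}\left(x+\sqrt{p}N\right)^{m}=\sum_{k=0}^{\left\lfloor m/2\right\rfloor}{m \choose 2k}\frac{\left(2k\right)!}{2^{k}k!}\,p^{k}x^{m-2k}=\sum_{k=0}^{\left\lfloor m/2\right\rfloor}\frac{m!}{k!\left(m-2k\right)!}\left(\frac{p}{2}\right)^{k}x^{m-2k}=g_{m}\left(x,\frac{p}{2}\right);
\]
your check ignored the binomial coefficient, whose $\left(2k\right)!$ must be cancelled by the Gaussian moment, so the factor $2^{k}$ survives unless the perturbation is $\sqrt{2p}N$. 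The correct representation is $g_{m}\left(x,p\right)=\mathbb{E}\left(x+\sqrt{2p}N\right)^{m}$, which is the paper's (\ref{GHrepresentation}).

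Once this is fixed, the ``genuinely delicate bit'' you anticipate is in fact immediate and requires no tracking through the polarization identities: Theorem \ref{thm:inner} holds for every real value of its parameter, so you simply invoke it with $2p$ in place of $p$, giving
\[
\left(\underbar{x}+\sqrt{2p}\,\underbar{N}\right)^{t}\left(\underbar{y}+\sqrt{2p}\,\underbar{M}\right)\sim\left(x+\sqrt{2p}N_{1}\right)\left(y+\sqrt{2p}M_{1}\right)+2p\,Z_{n-1}N.
\]
Your two computations then go through verbatim with $\sqrt{2p}$ everywhere: the left side becomes $M!$ times the left side of (\ref{eq:nowak3}), and on the right the $k=2j$ term of the binomial expansion carries the constant
\[
{M \choose 2j}\left(2p\right)^{2j}\frac{\left(2j\right)!}{2^{j}j!}\,2^{j}\left(\frac{n-1}{2}\right)_{j}=\frac{M!\left(2p\right)^{2j}}{j!\left(M-2j\right)!}\left(\frac{n-1}{2}\right)_{j}
\]
multiplying $g_{M-2j}\left(x,p\right)g_{M-2j}\left(y,p\right)$, which after division by $M!$ is exactly (\ref{eq:nowak3}). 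So your proposal is the paper's argument with a factor-of-two slip in the moment representation; you correctly guessed how it must resolve but did not close it, and closing it as above completes the proof.
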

\begin{proof}
We use the moment representation of Gould-Hopper polynomials
\begin{equation}
\label{GHrepresentation}
g_{m}\left(x,p\right)=\mathbb{E}\left(x+\sqrt{2p}N\right)^{m}
\end{equation}
where $N\sim\mathcal{N}\left(0,1\right)$: it is a consequence of definition (\ref{Gould-Hopper}) and of the fact that $\mathbb{E}N^{2k}=\frac{\left(2k\right)!}{2^{k}k!}$ and $\mathbb{E}N^{2k+1}=0$. We then recognize the left-hand
side of (\ref{eq:nowak3}) as the multinomial expansion
\begin{eqnarray*}
\mathbb{E}\sum_{\vert\underbar{m}\vert=M}\frac{1}{\underbar{m}!}\prod_{i=1}^{n}\left(x_{i}+\sqrt{2p}N_{i}\right)^{m_{i}}\left(y_{i}+\sqrt{2p}M_{i}\right)^{m_{i}} 
&  &\\
 =\frac{1}{M!}\mathbb{E}\left(\sum_{i=1}^{n}\left(x_{i}+\sqrt{2p}N_{i}\right)\left(y_{i}+\sqrt{2p}M_{i}\right)\right)^{M}\label{eq:powerM}
\end{eqnarray*}
where $\underbar{M}$ and $\underbar{N}\sim\mathcal{N}\left(0,\uuline{I_{n}}\right)$ are independent. The
 sum is identified as the inner product 
\[
(\underbar{x}+\sqrt{2p}\underbar{N})^{t}(\underbar{y}+\sqrt{2p}\underbar{M}).
\]
Using the multinomial expansion of the $M-th$ power of the stochastic equivalent (\ref{scalar}) of this expression
and taking expectations with  $\mathbb{E}Z_{n-1}^{2j}=2^{j}\left(\frac{n-1}{2}\right)_{j},$
we obtain the desired result.
\end{proof}
\vspace{-0.5cm}
\section{Two identities by Daboul and Mizrahi}
\vspace{-0.5cm}
%\subsection{Introduction}
The group of complex rotations $\mathcal{O}_{\mathbb{C}}\left(n\right)$ is the set of $n \times  n$ complex matrices $\uuline{O}$ such that $\uuline{O} \, \uuline{O}^{t}=\uuline{O}^{t} \, \uuline{O}=\uuline{I_{n}}.$ The \textbf{real} inner product $\underbar{x}^{t} \, \underbar{y}=\sum_{i=1}^{n}x_{i}y_{i}, \,\, \underbar{x},\,\,\underbar{y} \in \mathbb{C}^{n},$ is preserved under the action of this group.
In \cite{daboul}, sum rules for Gould-Hopper polynomials are proved using the covariant transformation property under $\mathcal{O}_{\mathbb{C}}\left(n\right)$ of the raising operators associated with these polynomials. We show here that these sum rules can be equivalently deduced from the following complex rotational invariance principle.
% for standard Gaussian vectors, and hence for Gould-Hopper polynomials, that we state as % follows.

\begin{theorem}
\label{prop:rotational}
For any $\uuline{O} \in \mathcal{O}_{\mathbb{C}}\left(n\right)$ and  $\underbar{N}\sim \mathcal{N}(0,\uuline{I_{n}})$, the Gould-Hopper polynomial of degree $m$ reads
\[
g_{m}\left(x,p\right)=\mathbb{E}\left(x+\sqrt{2p}\left(\uuline{O} \, \underbar{N}\right)_{j}\right)^{m},\,\,\forall j\in\left[1,n\right].
\]
\end{theorem}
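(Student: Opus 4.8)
The plan is to reduce the statement to the already-established moment representation (\ref{GHrepresentation}), namely $g_m(x,p)=\mathbb{E}(x+\sqrt{2p}N)^m$ with $N\sim\mathcal{N}(0,1)$. Comparing this with the claimed formula, what must be shown is that for each fixed $j\in[1,n]$ the single coordinate $(\uuline{O}\,\underbar{N})_j$ of the rotated Gaussian vector is itself a standard Gaussian random variable; then $x+\sqrt{2p}(\uuline{O}\,\underbar{N})_j$ has the same distribution as $x+\sqrt{2p}N$, and taking the $m$-th moment gives the result. So the whole content of the theorem is the distributional identity $(\uuline{O}\,\underbar{N})_j\sim\mathcal{N}(0,1)$ for $\uuline{O}\in\mathcal{O}_{\mathbb{C}}(n)$.

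First I would write $(\uuline{O}\,\underbar{N})_j=\underbar{r}^t\underbar{N}$ where $\underbar{r}^t$ is the $j$-th row of $\uuline{O}$, a vector in $\mathbb{C}^n$. Since $\underbar{N}$ has independent standard Gaussian components, $\underbar{r}^t\underbar{N}=\sum_i r_i N_i$ is a linear combination of independent Gaussians and hence a (possibly complex-coefficient) Gaussian with mean $0$; because the $N_i$ are real, its variance in the relevant sense is $\sum_i r_i^2=\underbar{r}^t\underbar{r}$, which is exactly the $(j,j)$ entry of $\uuline{O}\,\uuline{O}^t=\uuline{I_n}$, so $\sum_i r_i^2=1$. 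The key point, which I would state carefully, is that for a vector $\underbar{r}\in\mathbb{C}^n$ with $\underbar{r}^t\underbar{r}=1$ the scalar $\sum_i r_i N_i$ is distributed as a single standard \emph{real} Gaussian $N_1$: write $r_i=a_i+\mathrm{i}b_i$; then $\sum r_i N_i=\underbar{a}^t\underbar{N}+\mathrm{i}\,\underbar{b}^t\underbar{N}$, and the constraint $\underbar{r}^t\underbar{r}=1$ reads $|\underbar{a}|^2-|\underbar{b}|^2=1$ and $\underbar{a}^t\underbar{b}=0$. By real orthogonal invariance of $\underbar{N}$ (the same principle already used in the proof of Theorem \ref{thm:inner}) I may rotate so that $\underbar{a}$ is a multiple of $\underbar{e}_1$ and $\underbar{b}$ a multiple of $\underbar{e}_2$, giving $\sum r_i N_i\sim |\underbar{a}|N_1+\mathrm{i}|\underbar{b}|N_2$; this complex-valued Gaussian has, however, the property that its $m$-th moment equals that of a standard real Gaussian precisely because $|\underbar{a}|^2-|\underbar{b}|^2=1$ — indeed $\mathbb{E}(|\underbar{a}|N_1+\mathrm{i}|\underbar{b}|N_2)^{2k}=\frac{(2k)!}{2^k k!}(|\underbar{a}|^2-|\underbar{b}|^2)^k=\frac{(2k)!}{2^k k!}$ and the odd moments vanish, which are exactly the moments of $\mathcal{N}(0,1)$.

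I would then conclude: since $x+\sqrt{2p}(\uuline{O}\,\underbar{N})_j$ and $x+\sqrt{2p}N$ have matching moments of every order (the even ones agreeing by the computation above, the odd ones being polynomials in $x$ with the pure-Gaussian moments, which match term by term), their $m$-th moments coincide, and by (\ref{GHrepresentation}) this common value is $g_m(x,p)$. The one subtlety — and the main thing to get right rather than a genuine obstacle — is that $(\uuline{O}\,\underbar{N})_j$ is in general \emph{not} equal in distribution to a real standard Gaussian as a complex random variable; it is only its moments, and in particular the moments appearing when expanding $(x+\sqrt{2p}(\uuline{O}\,\underbar{N})_j)^m$, that agree with those of $\mathcal{N}(0,1)$. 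So the cleanest phrasing is to avoid claiming $(\uuline{O}\,\underbar{N})_j\sim N_1$ outright and instead verify the moment identity $\mathbb{E}(\uuline{O}\,\underbar{N})_j^{\,k}=\mathbb{E}N^k$ for all $k\ge 0$, using $\underbar{r}^t\underbar{r}=1$; everything else is bookkeeping with the binomial expansion in $x$.
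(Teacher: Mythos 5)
Your proposal is correct, but it proves the key fact by a genuinely different route than the paper. The paper's argument is a one-line characteristic-function computation: it uses the fact that the Gaussian identity $\mathbb{E}\exp\left(\imath tN\right)=\exp\left(-t^{2}/2\right)$ remains valid for \emph{complex} values of $t$ (analytic continuation of the entire characteristic function), factors $\mathbb{E}\exp\left(\imath t\left(\uuline{O}\,\underbar{N}\right)_{j}\right)=\prod_{k}\exp\left(-t^{2}O_{jk}^{2}/2\right)$ by independence, and invokes the complex orthogonality relation $\sum_{k}O_{jk}^{2}=1$ to identify the result with the standard Gaussian characteristic function, after which the moment representation (\ref{GHrepresentation}) gives the theorem. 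You instead match moments directly: writing the $j$-th row as $\underbar{a}+\imath\underbar{b}$, extracting $\vert\underbar{a}\vert^{2}-\vert\underbar{b}\vert^{2}=1$ and $\underbar{a}^{t}\underbar{b}=0$ from the same orthogonality relation, using \emph{real} orthogonal invariance (as in Theorem \ref{thm:inner}) to reduce to the two-dimensional variable $\vert\underbar{a}\vert N_{1}+\imath\vert\underbar{b}\vert N_{2}$, and computing its even moments to be $\frac{\left(2k\right)!}{2^{k}k!}\left(\vert\underbar{a}\vert^{2}-\vert\underbar{b}\vert^{2}\right)^{k}=\frac{\left(2k\right)!}{2^{k}k!}$ with vanishing odd moments. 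Both arguments hinge on exactly the same orthogonality identity, but yours is more elementary (no continuation of the characteristic function to complex arguments) and, importantly, more scrupulous about what is actually being asserted: since $\left(\uuline{O}\,\underbar{N}\right)_{j}$ is complex-valued in general, it cannot literally be equal in distribution to a real $\mathcal{N}\left(0,1\right)$ variable, and what the paper's computation (like yours) really establishes is equality of all moments, which is precisely what is needed to conclude via (\ref{GHrepresentation}). The paper's phrasing buys brevity; yours makes the logical content of ``$\left(\uuline{O}\,\underbar{N}\right)_{j}\sim\mathcal{N}\left(0,1\right)$'' precise.
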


%This identity can be proved by computing the generating function
%\[
%\sum_{k=0}^{+\infty}2^{k}\mathbb{E}\left(x+\imath\left(M\underbar{z}\right)_{j}\right)^{k}\frac{t^{k}}{k!}=\exp\left(2tx\right)\mathbb{E}\exp\left(2\imath t\left(M\underbar{z}\right)_{j}\right)
%\]
%with
%\begin{equation}
%\mathbb{E}\exp\left(2\imath t\left(M\underbar{z}\right)_{j}\right) =\prod_{k=1}^{n}\mathbb{E}\exp\left(2\imath tM_{jk}z_{k}\right)
% =  \prod_{k=1}^{n}\exp\left(-t^{2}M_{jk}^{2}\right)=\exp\left(-t^{2}\sum_{k=1}^{n}M_{jk}^{2}\right).
%\end{equation}
\begin{proof}
We only need to check that $\left(\uuline{O} \, \underbar{N}\right)_{j} \sim \mathcal{N}\left(0,1\right)$; but, using the fact that the identity
$\mathbb{E}\exp\left(2\imath tN_{j}\right)=\exp\left(-t^{2}\right)$
holds for any \textbf{complex} value of $t$, the characteristic function of this random variable reads
\[
\mathbb{E}\exp\left(\imath t\left(\uuline{O} \, \underbar{N}\right)_{j}\right) 
%=\prod_{k=1}^{n}\mathbb{E}\exp\left(2\imath tM_{jk}z_{k}\right)
% =  \prod_{k=1}^{n}\exp\left(-t^{2}M_{jk}^{2}\right)
=\exp\left(-t^{2}\sum_{k=1}^{n}O_{jk}^{2}\right)=\exp\left(-t^{2}\right)
\]
by the complex orthogonality condition.
%we are left with $\exp\left(-t^{2}\right)$,
%so that the computed generating function coincides with that of the
%Hermite polynomials. 
%We note that the key point in this proof is
\end{proof}
Using this probabilistic invariance, we derive a simple proof of the summation theorem \cite[Prop. 1]{daboul} 
\begin{theorem}
For any $\uuline{O}\in \mathcal{O}_{\mathbb{C}}\left(n\right),$
\[
g_{m}\left(\left(\uuline{O} \, \underbar{x}\right)_{i},p\right)=\sum_{\vert \underbar{m}\vert=m}{m \choose m_{1},\dots,m_{n}}\prod_{j=1}^{n} O_{ij}^{m_{j}}g_{m_{j}}\left(x_{j},p\right)
\] 
\end{theorem}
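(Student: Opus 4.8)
The plan is to use the moment representation of Gould-Hopper polynomials together with the complex rotational invariance of Theorem \ref{prop:rotational}. First I would write, by the definition of the multi-index version and the moment representation (\ref{GHrepresentation}),
\[
\prod_{j=1}^{n}g_{m_j}\left(x_j,p\right)=\mathbb{E}\prod_{j=1}^{n}\left(x_j+\sqrt{2p}N_j\right)^{m_j}
\]
where $\underbar{N}\sim\mathcal{N}(\underbar{0},\uuline{I_n})$. Multiplying by the multinomial coefficient and summing over $|\underbar{m}|=m$, the multinomial theorem collapses the right-hand side to
\[
\sum_{|\underbar{m}|=m}{m\choose m_1,\dots,m_n}\prod_{j=1}^{n}\left(x_j+\sqrt{2p}N_j\right)^{m_j}=\left(\sum_{j=1}^{n}\left(x_j+\sqrt{2p}N_j\right)\right)^{m},
\]
so that the entire right-hand side of the claimed identity equals $\mathbb{E}\bigl(\sum_j x_j+\sqrt{2p}\sum_j N_j\bigr)^m$. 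This is not yet what I want; instead I should apply the multinomial expansion row by row so that the $i$-th row of $\uuline{O}$ appears. The cleaner route is to start from the target expression $g_m\bigl((\uuline{O}\,\underbar{x})_i,p\bigr)$ and expand.

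So the key step: by the moment representation applied to the single variable $(\uuline{O}\,\underbar{x})_i=\sum_{j=1}^n O_{ij}x_j$,
\[
g_m\bigl((\uuline{O}\,\underbar{x})_i,p\bigr)=\mathbb{E}\left(\sum_{j=1}^n O_{ij}x_j+\sqrt{2p}\,N\right)^m
\]
with $N\sim\mathcal{N}(0,1)$. Now I replace the single Gaussian $N$ by $(\uuline{O}\,\underbar{N})_i=\sum_{j=1}^n O_{ij}N_j$, which is legitimate precisely because Theorem \ref{prop:rotational} guarantees $(\uuline{O}\,\underbar{N})_i\sim\mathcal{N}(0,1)$ — in fact the theorem says the whole moment generating expression is unchanged. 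Hence
\[
g_m\bigl((\uuline{O}\,\underbar{x})_i,p\bigr)=\mathbb{E}\left(\sum_{j=1}^n O_{ij}\bigl(x_j+\sqrt{2p}\,N_j\bigr)\right)^m.
\]
Then I apply the multinomial theorem to this sum of $n$ terms, pulling out $\prod_j O_{ij}^{m_j}$, and use independence of the $N_j$ to factor the expectation as $\prod_j \mathbb{E}\bigl(x_j+\sqrt{2p}\,N_j\bigr)^{m_j}=\prod_j g_{m_j}(x_j,p)$ by (\ref{GHrepresentation}) again. This yields exactly
\[
\sum_{|\underbar{m}|=m}{m\choose m_1,\dots,m_n}\prod_{j=1}^n O_{ij}^{m_j}g_{m_j}(x_j,p),
\]
which is the claim.

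The main subtlety — the only place where anything nontrivial happens — is the replacement of $N$ by $(\uuline{O}\,\underbar{N})_i$. One must make sure that this is justified not merely because both are standard Gaussian marginally, but that the function being averaged, namely $z\mapsto(\sum_j O_{ij}x_j+\sqrt{2p}\,z)^m$, is a polynomial so its expectation depends only on the moments of $z$; since $(\uuline{O}\,\underbar{N})_i$ has the same moments as a standard Gaussian (this is the content of Theorem \ref{prop:rotational}, where the characteristic function argument extends to complex arguments and hence pins down all moments), the substitution is valid even though $\uuline{O}$ and the $O_{ij}$ may be complex. After that, the multinomial expansion and the factorization over independent coordinates are routine. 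I would also remark that only the $i$-th row of $\uuline{O}$ enters, so the single orthonormality relation $\sum_k O_{ik}^2=1$ is all that is used, exactly as in the proof of Theorem \ref{prop:rotational}.
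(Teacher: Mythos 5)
Your proof is correct and follows essentially the same route as the paper: apply the moment representation to $g_m\bigl((\uuline{O}\,\underbar{x})_i,p\bigr)$, use Theorem \ref{prop:rotational} to replace the Gaussian by $(\uuline{O}\,\underbar{N})_i$, rewrite the argument as $\sum_j O_{ij}(x_j+\sqrt{2p}N_j)$, and finish with the multinomial expansion and independence of the $N_j$. Your extra remark that the substitution rests on matching moments (not on $(\uuline{O}\,\underbar{N})_i$ being a genuine real Gaussian when $\uuline{O}$ is complex) is exactly the content of the paper's Theorem \ref{prop:rotational}, so nothing is missing; the exploratory first paragraph is simply superfluous.
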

\begin{proof}
The proof is based on the complex rotational invariance (Theorem \ref{prop:rotational}) as follows
\begin{eqnarray*}
g_{m}\left(\left(\uuline{O} \, \underbar{x}\right)_{i},p\right) =\mathbb{E}\left(\left(\uuline{O} \, \underbar{x}\right)_{i}+ \sqrt{2p} \left(\uuline{O} \, \underbar{N}\right)_{i}\right)^{k} \sim \mathbb{E}\left(\sum_{j=1}^{n}O_{ij}\left(x_{j}+\sqrt{2p}N_{j}\right)\right)^{m}.
\end{eqnarray*}
The desired result is obtained by expanding this power using the multinomial formula
as
\[
\hspace{-0.8cm}
\mathbb{E}\sum_{\vert \underbar{m}\vert=m}{m \choose m_{1},\dots,m_{n}} \prod_{j=1}^{n} O_{ij}^{m_{j}}\left(x_{j}+\sqrt{2p}N_{j}\right)^{m_{j}}=\sum_{\vert \underbar{m}\vert=m}{m \choose m_{1},\dots,m_{n}}\prod_{j=1}^{n}O_{ij}^{m_{j}}g_{m_{j}}\left(x_{j},p\right)
.\]
\end{proof}

%\subsection{The factorization sum rule}
We now prove equally easily the following  generalized factorization sum rule \cite[Prop.3]{daboul}
\begin{theorem} With $p\in\mathbb{R},\,\,c,s,x,y\in\mathbb{C}$ and $c^{2}+s^{2}=1$,
\[
g_{m_{1}}\left(cx-sy,p\right)g_{m_{1}}\left(sx+cy,p\right)=\sum_{r=0}^{m_{1}+m_{2}} C_{m_{1},m_{2},r}\left(c,s\right)g_{r}\left(x,p\right)g_{m_{1}+m_{2}-r}\left(y,p\right)
\]
where coefficients $C_{m_{1},m_{2},r}\left(c,s\right)$ are given by
\begin{equation}
C_{m_{1},m_{2},r}\left(c,s\right)=\sum_{l=0}^{m_{2}\wedge r}{m_{1} \choose r-l}{m_{2} \choose l}\left(-1\right)^{m_{1}-r+l}c^{m_{2}+r-2l}s^{m_{1}-r+2l}.\label{eq:coeffC}
\end{equation}
\end{theorem}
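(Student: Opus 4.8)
The plan is to mimic exactly the strategy used for the previous two sum rules, namely to replace the Gould-Hopper polynomials by Gaussian moments via the representation \eqref{GHrepresentation} and to exploit the fact that the change of variables $(cx-sy,\,sx+cy)$ is a rotation, so that the complex rotational invariance of Theorem \ref{prop:rotational} applies in dimension $n=2$. Concretely, set $\uuline{O}=\begin{pmatrix} c & -s \\ s & c\end{pmatrix}$, which lies in $\mathcal{O}_{\mathbb{C}}(2)$ precisely because $c^{2}+s^{2}=1$. Writing $\underbar{x}=(x,y)^{t}$ and letting $\underbar{N}=(N_{1},N_{2})\sim\mathcal{N}(\underbar 0,\uuline{I_{2}})$ be independent, I would first record that by \eqref{GHrepresentation},
\[
g_{m_{1}}\left(cx-sy,p\right)g_{m_{2}}\left(sx+cy,p\right)=\mathbb{E}\left(\left(\uuline{O}\,\underbar{x}\right)_{1}+\sqrt{2p}\,\widetilde N_{1}\right)^{m_{1}}\left(\left(\uuline{O}\,\underbar{x}\right)_{2}+\sqrt{2p}\,\widetilde N_{2}\right)^{m_{2}},
\]
where $\widetilde N_{1},\widetilde N_{2}$ are \emph{independent} standard Gaussians (each factor carries its own independent Gaussian, exactly as in the proof of Theorem \ref{thm:Graczyk}).

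The second step is the key one: I would like to pull the matrix $\uuline{O}$ out, i.e.\ to write the pair $\left(\widetilde N_{1},\widetilde N_{2}\right)$ as $\uuline{O}\,\underbar{N}$ for a single standard Gaussian vector $\underbar{N}$. This is legitimate because $\uuline{O}$ is a (complex) orthogonal matrix, so $\uuline{O}\,\underbar{N}$ has independent standard Gaussian entries by the computation in the proof of Theorem \ref{prop:rotational} — in fact the joint characteristic function of $\uuline{O}\,\underbar{N}$ factors as $\exp(-t_{1}^{2}-t_{2}^{2})$ using $\uuline{O}\,\uuline{O}^{t}=\uuline{I_2}$, so $(\widetilde N_1,\widetilde N_2)\sim \uuline{O}\,\underbar{N}$ jointly. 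Hence
\[
g_{m_{1}}\left(cx-sy,p\right)g_{m_{2}}\left(sx+cy,p\right)\sim\mathbb{E}\left(\uuline{O}\bigl(\underbar{x}+\sqrt{2p}\,\underbar{N}\bigr)\right)_{1}^{m_{1}}\left(\uuline{O}\bigl(\underbar{x}+\sqrt{2p}\,\underbar{N}\bigr)\right)_{2}^{m_{2}}.
\]
Writing out the two components, $\left(\uuline{O}(\underbar{x}+\sqrt{2p}\underbar N)\right)_{1}=c(x+\sqrt{2p}N_{1})-s(y+\sqrt{2p}N_{2})$ and $\left(\uuline{O}(\underbar{x}+\sqrt{2p}\underbar N)\right)_{2}=s(x+\sqrt{2p}N_{1})+c(y+\sqrt{2p}N_{2})$, so with the shorthand $U=x+\sqrt{2p}N_{1}$, $V=y+\sqrt{2p}N_{2}$ the expectation becomes $\mathbb{E}\,(cU-sV)^{m_{1}}(sU+cV)^{m_{2}}$.

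The final step is a purely algebraic binomial expansion followed by taking expectations. Expand $(cU-sV)^{m_{1}}=\sum_{a}\binom{m_{1}}{a}c^{a}(-s)^{m_{1}-a}U^{a}V^{m_{1}-a}$ and $(sU+cV)^{m_{2}}=\sum_{b}\binom{m_{2}}{b}s^{b}c^{m_{2}-b}U^{b}V^{m_{2}-b}$, multiply, and collect terms according to the total power $r$ of $U$; setting $r=a+b$ and $l=b$ (so $a=r-l$, with $l$ ranging over $0\le l\le m_{2}$ and $0\le r-l\le m_{1}$, i.e.\ $l\le m_{2}\wedge r$) produces exactly the coefficient $C_{m_{1},m_{2},r}(c,s)$ of \eqref{eq:coeffC}, the sign $(-1)^{m_{1}-a}=(-1)^{m_{1}-r+l}$ and the powers $c^{a+m_{2}-b}=c^{m_{2}+r-2l}$, $s^{m_{1}-a+b}=s^{m_{1}-r+2l}$ matching term by term. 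Since $U$ and $V$ are independent, $\mathbb{E}\,U^{r}V^{m_{1}+m_{2}-r}=\mathbb{E}\,U^{r}\cdot\mathbb{E}\,V^{m_{1}+m_{2}-r}=g_{r}(x,p)\,g_{m_{1}+m_{2}-r}(x,p)$ — wait, rather $g_{r}(x,p)g_{m_{1}+m_{2}-r}(y,p)$ by \eqref{GHrepresentation} again — and the claimed identity follows.

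I expect the only real subtlety to be the bookkeeping in matching the double sum $\sum_{a,b}$ against the single sum $\sum_{r}\sum_{l}$ and verifying that the range of $l$ is precisely $0$ to $m_{2}\wedge r$; everything else (the reduction to a $2\times 2$ rotation, the Gaussian moment representation, the complex-orthogonal invariance) is a direct application of Theorem \ref{prop:rotational} and the moment identity \eqref{GHrepresentation} already established. There is no analytic obstacle here — unlike Theorem \ref{thm:Graczyk}, no chi-distributed factor appears, because we keep track of the two coordinates separately rather than passing to norms.
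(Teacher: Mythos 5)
Your proposal is correct and takes essentially the same route as the paper: both replace the two Gould--Hopper factors by Gaussian moments via \eqref{GHrepresentation}, use the complex orthogonal invariance of the pair $\left(cN_{1}-sN_{2},\,sN_{1}+cN_{2}\right)$ to transfer the rotation onto $\left(x+\sqrt{2p}N_{1},\,y+\sqrt{2p}N_{2}\right)$, and finish with the same binomial expansion, reindexing by $r=a+b$, and factorization by independence. Your explicit check that the \emph{joint} law of $\uuline{O}\,\underbar{N}$ is standard Gaussian (via the joint characteristic function) is a slightly more careful rendering of a step the paper asserts without detail, but it is not a different argument.
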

\begin{proof}
By the complex orthogonal invariance,  $\left[N_{1},N_{2}\right] \sim \mathcal{N}\left(0,I_{2}\right)\sim \left[cN_{1}-sN_{2},sN_{1}+cN_{2}\right]$ so that
\begin{eqnarray*}
& &\hspace{-0.5cm}g_{m_{1}}\left(cx-sy,p\right)g_{m_{2}}\left(sx+cy,p\right)\\
& &=
\mathbb{E}\left(cx-sy+\sqrt{2p}\left(cN_{1}-sN_{2}\right)\right)^{m_{1}}\mathbb{E}\left(sx+cy+\sqrt{2p}\left(sN_{1}+cN_{2}\right)\right)^{m_{2}}\\
& &=\mathbb{E}\sum_{k=0}^{m_{1}}\sum_{l=0}^{m_{2}}{m_{1} \choose k}{m_{2} \choose l} \left(-1\right)^{m_{1}-k} c^{m_{2}+k-l} s^{m_{1}+l-k}\left(x+\sqrt{2p} N_{1}\right)^{l+k}\left(y+\sqrt{2p} N_{2}\right)^{m-l+n-k}\\
& &=\sum_{r=0}^{m_{1}+m_{1}}C_{m_{1},m_{2},r}\left(c,s\right)g_{r}\left(x,p\right)g_{m_{1}+m_{2}-r}\left(y,p\right)
\end{eqnarray*}
with coefficients $C_{m_{1},m_{2},r}\left(c,s\right)$ as in (\ref{eq:coeffC}),
which coincides with \cite[eq. 19]{daboul}. 
\end{proof}
%\subsection{A sum rule based on rotational invariance}
%Finally, still using complex rotational invariance, we prove easily the following \cite[Prop. 5]{daboul}
%\begin{theorem}
%With $w_{i}=\left(\uuline{O}x\right)_{i},$ the following sum rule holds
%\begin{equation}
%\label{Daboul3}
%\sum_{\vert \underbar{m}\vert=m}\frac{m!}{m_{1}!\dots m_{n}!}\prod_{j=1}^{n}g_{2m_{j}}\left(w_{j},p\right)=\sum_{\vert \underbar{m}\vert=m}\frac{m!}{m_{1}!\dots m_{n}!}\prod_{j=1}^{n}g_{2m_{j}}\left(x_{j},p\right)
%\end{equation}
%\end{theorem}
%\begin{proof}
%Using representation(\ref{GHrepresentation}), the left-hand side of (\ref{Daboul3}) is
%\begin{eqnarray*}
%\mathbb{E}\sum_{\vert \underbar{m}\vert=m}\frac{m!}{\underbar{m}!}\prod_{j=1}^{n}\left(w_{j}+\sqrt{2p} N_{j}\right)^{2m_{j}}  \hspace{-0.3cm}=  \mathbb{E}\left(\sum_{j=1}^{n}\left(w_{j}+\sqrt{2p} N_{j}\right)^{2}\right)^{m} \hspace{-0.3cm}=  \mathbb{E} \left(\sum_{j=1}^{n}\left(\left(\uuline{O}x\right)_{j}+\sqrt{2p}\left(O\underbar{N}\right)_{j}\right)^{2}\right)^{m}
%\end{eqnarray*}
%which coincides with the right-hand side of (\ref{Daboul3}).
%\end{proof}

We note that all former sum rules apply to the Hermite polynomials since
$H_{n}\left( x \right) = g_{n}\left(2x, \sqrt{-2} \right).$
Moreover, theorem \ref{thm:inner} extends to the matrix case as follows, with $\Vert \uuline{X} \Vert_{F}$ denoting the Frobenius norm:
\begin{theorem}
If $\uuline{x},\uuline{y} \in \mathbb{R}^{m\times n}$  and $\uuline{M},Ê\uuline{N} \in \mathbb{R}^{m\times n}$ are independent matrices with i.i.d. standard complex Gaussian entries then, with  $M_{1}, N_{1}, N\sim \mathcal{N}\left(0,1\right)$ and $Z_{mn-1}\sim \chi_{mn-1}$ independent,
\[
\mathtt{tr}(\uuline{x}+\uuline{N})^{t}(\uuline{y}+\uuline{M}) \sim \left(x+N_{1}\right)\left(y+M_{1}\right) + Z_{mn-1}N
\]
with $x=1/2(\Vert \uuline{x}+\uuline{y} \Vert_{F}^{2} + \Vert \uuline{x}-\uuline{y} \Vert_{F}^{2}),\, y=1/2(\Vert \uuline{x}+\uuline{y} \Vert_{F}^{2} - \Vert \uuline{x}-\uuline{y} \Vert_{F}^{2}).$ 
\end{theorem}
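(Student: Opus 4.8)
The plan is to reduce the matrix identity to the vector case already established in Theorem~\ref{thm:inner} by means of the vectorization isometry. For a matrix $\uuline{A}\in\mathbb{R}^{m\times n}$, write $\mathrm{vec}(\uuline{A})\in\mathbb{R}^{mn}$ for the vector obtained by stacking the columns of $\uuline{A}$. This map is linear and is an isometry from $(\mathbb{R}^{m\times n},\langle\cdot,\cdot\rangle_{F})$ onto $(\mathbb{R}^{mn},\langle\cdot,\cdot\rangle)$: it obeys $\mathtt{tr}(\uuline{A}^{t}\uuline{B})=\mathrm{vec}(\uuline{A})^{t}\mathrm{vec}(\uuline{B})$, so in particular $\Vert\uuline{A}\Vert_{F}=\vert\mathrm{vec}(\uuline{A})\vert$. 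Moreover, since $\mathrm{vec}$ only reindexes the entries, it carries the matrix Gaussian ensemble to the vector one: $\mathrm{vec}(\uuline{N}),\mathrm{vec}(\uuline{M})\sim\mathcal{N}(\underbar{0},\uuline{I_{mn}})$, and these two vectors remain independent.

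First I would use linearity of $\mathrm{vec}$ to rewrite the left-hand side as
\[
\mathtt{tr}\bigl((\uuline{x}+\uuline{N})^{t}(\uuline{y}+\uuline{M})\bigr)=\bigl(\mathrm{vec}(\uuline{x})+\mathrm{vec}(\uuline{N})\bigr)^{t}\bigl(\mathrm{vec}(\uuline{y})+\mathrm{vec}(\uuline{M})\bigr).
\]
Then I would invoke Theorem~\ref{thm:inner} with the ambient dimension $mn$ in place of $n$, with $p=1$, and with the vectors $\mathrm{vec}(\uuline{x}),\mathrm{vec}(\uuline{y}),\mathrm{vec}(\uuline{N}),\mathrm{vec}(\uuline{M})$. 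This yields directly
\[
\bigl(\mathrm{vec}(\uuline{x})+\mathrm{vec}(\uuline{N})\bigr)^{t}\bigl(\mathrm{vec}(\uuline{y})+\mathrm{vec}(\uuline{M})\bigr)\sim(x+N_{1})(y+M_{1})+Z_{mn-1}N,
\]
with $N,N_{1},M_{1}$ independent standard Gaussians, $Z_{mn-1}\sim\chi_{mn-1}$ independent of them, and $x,y$ defined by~(\ref{polarization}) applied to $\mathrm{vec}(\uuline{x})$ and $\mathrm{vec}(\uuline{y})$. Finally I would transport the constants back through the isometry: $\vert\mathrm{vec}(\uuline{x})\pm\mathrm{vec}(\uuline{y})\vert=\vert\mathrm{vec}(\uuline{x}\pm\uuline{y})\vert=\Vert\uuline{x}\pm\uuline{y}\Vert_{F}$, hence $x=\frac12\bigl(\Vert\uuline{x}+\uuline{y}\Vert_{F}+\Vert\uuline{x}-\uuline{y}\Vert_{F}\bigr)$ and $y=\frac12\bigl(\Vert\uuline{x}+\uuline{y}\Vert_{F}-\Vert\uuline{x}-\uuline{y}\Vert_{F}\bigr)$, i.e.\ the stated expressions for $x$ and $y$ (read with Frobenius norms, exactly as in~(\ref{polarization})).

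I do not expect a genuine obstacle here. The only point that deserves a sentence of care is the Gaussian-ensemble transfer under $\mathrm{vec}$, namely that stacking columns preserves the i.i.d.\ standard Gaussian law and the independence of $\uuline{N}$ and $\uuline{M}$; this is immediate because $\mathrm{vec}$ merely permutes coordinates. All the analytic content — the polarization identity together with the real orthogonal invariance that collapses the two norms into the scalar form with a chi factor — is supplied unchanged by Theorem~\ref{thm:inner}, so the matrix case is essentially a relabelling of the vector case.
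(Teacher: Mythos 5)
Your proof is correct, and it takes a genuinely different (and leaner) route than the paper. You reduce the matrix statement to a single application of Theorem~\ref{thm:inner} in dimension $mn$ via the vectorization isometry $\mathrm{vec}$, using $\mathtt{tr}(\uuline{A}^{t}\uuline{B})=\mathrm{vec}(\uuline{A})^{t}\mathrm{vec}(\uuline{B})$ and the fact that $\mathrm{vec}$ merely permutes coordinates, so the Gaussian ensembles and independence transfer untouched. The paper instead iterates Theorem~\ref{thm:inner}: it applies it column by column to write the trace as $\sum_{i}(u_{i}+N_{i})(v_{i}+M_{i})+\chi^{(i)}G_{i}$, applies it once more to the first sum, and then needs the extra distributional fact that a sum of independent chi-times-Gaussian products recombines as $\chi_{mn-1}N$. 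Your argument avoids that additional lemma and the attendant bookkeeping entirely, at the mild cost of introducing $\mathrm{vec}$; the paper's two-step route keeps the column structure visible but is more intricate (and, as printed, marred by index and norm typos). One point you handled correctly but could state explicitly: your derivation gives $x=\frac{1}{2}\left(\Vert\uuline{x}+\uuline{y}\Vert_{F}+\Vert\uuline{x}-\uuline{y}\Vert_{F}\right)$ and $y=\frac{1}{2}\left(\Vert\uuline{x}+\uuline{y}\Vert_{F}-\Vert\uuline{x}-\uuline{y}\Vert_{F}\right)$, i.e.\ with unsquared Frobenius norms, consistent with (\ref{polarization}); the squared norms in the theorem statement (like the word \og complex\fg{} for the Gaussian entries, which both you and the paper's own proof treat as real) appear to be typos, so it is worth saying outright that you are proving the corrected statement.
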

\begin{proof}
Denoting $\underbar{x}_{i}$ the $i-$th column vector of $\uuline{x}$ and $\underbar{u},\underbar{v}$ the vectors with components $u_{i},v_{i}= 1/2(\Vert \underbar{x}_{i}+\underbar{y}_{i} \Vert^{2} \pm \Vert \underbar{x}_{i}-\underbar{y}_{i} \Vert),$ we deduce that $1/2\left(\Vert \underbar{u}+\underbar{v} \Vert^{2} \pm \Vert \underbar{u}-\underbar{v} \Vert\right)=x,y$ and by theorem \ref{thm:inner}
\[
\mathtt{tr}(\uuline{x}+\uuline{N})^{t}(\uuline{y}+\uuline{M}) = \sum_{i=1}^{m} 
(\underbar{x}_{i}+\underbar{N}_{i})^{t}(\underbar{y}_{i}+\underbar{M}_{i})
\sim \sum_{i=1}^{m} 
\left(u_{i}+N_{i}\right)\left(v_{i}+M_{i}\right) + \chi_{n-1}^{\left(i\right)}G_{i}.
\] 
Using again theorem \ref{thm:inner}, the first sum is distributed as $\left(x+N_{1}\right)\left(y+M_{1}\right) + \chi_{m-1}G$ and since
$\chi_{m-1}G_{2} + \sum_{i=1}^{m} \chi_{n-1}^{\left(i\right)}G_{i} \sim \chi_{nm-1}N,$ the result follows.
\end{proof}
%The proof follows the steps of that of theorem \ref{thm:inner}; we skip the details.
% The Acknowledgements are an un-numbered section
%\section*{Acknowledgements}
%The author thanks the LTHI laboratory for its invitation.
\vspace{-0.29cm}

\end{document}